
\documentclass[10pt,a4paper]{article}
\usepackage{amsmath, amssymb, amsthm, verbatim} 
\usepackage{graphicx}
\usepackage{pxfonts} 

\usepackage{bm}
\usepackage[all]{xy}
\usepackage{eucal} 
\setlength{\textheight}{25cm}
\setlength{\topmargin}{-2cm}
\setlength{\oddsidemargin}{-1.5cm} 
\setlength{\evensidemargin}{-1	cm} 
\textwidth=530pt

\theoremstyle{plain}
  \newtheorem{thm}{Theorem}[section]
  \newtheorem{prop}[thm]{Proposition}
  
  \newtheorem{cor}[thm]{Corollary}

  \newtheorem{defn}[thm]{Definition}
  \newtheorem{rem}[thm]{Remark}

\def\address#1#2{\begingroup
\noindent\parbox[t]{7.8cm}{%
\small{\scshape\ignorespaces#1}\par\vskip1ex
\noindent\small{\itshape E-mail address}%
\/: #2\par\vskip4ex}\hfill%
\endgroup}%

\begin{document} 
\title{Twisted Poincar{\'e} Lemma and Twisted {\v{C}}ech-de Rham 
Isomorphism \\ 
in case of Projective Line} 
\author{Ko-Ki Ito}   
\maketitle


\section{Introduction} 
As is well known, a system of inhomogeneous linear differential equations 
\begin{align*} 
( d - A ) \begin{bmatrix} g_1 \\ \vdots \\ g _N \end{bmatrix} 
= \begin{bmatrix} \eta _1 \\ \vdots \\ \eta _N \end{bmatrix}  
\end{align*} 
for $n$ $1$-forms $\eta _1 , \ldots , \eta _N$ and 
an $N \times N$-matrix $A$ whose entries are $1$-forms can be solved by the {\it method of variation of constants} 
in case the connection $d-A$ is integrable, 
which is summarized as follows. 
The integrability tells us that 
the sheaf $\mathcal{L}$ of solutions of the 
homogeneous equation 
(that is, $\mathcal{L} = \mathrm{Ker} ( d-A) \subset \mathcal{O} ^{\oplus N}$) 
is a local system of rank $N$, and that the following diagram is commutative: 
\begin{align*} 
\xymatrix{\mathcal{O} ^{\oplus N} \ar[rr] ^{\Phi} _{\cong} \ar[d] ^{d-A} 
&& \mathcal{L} \otimes _{\Bbb{C}} \mathcal{O}    \ar[d] ^{1 \otimes d} \\ 
\mathcal{O} ^{\oplus N} \otimes _{\mathcal{O}} \Omega ^1 
\ar[rr] ^{\Phi} _{\cong} && 
 \mathcal{L}  \otimes _{\Bbb{C}} \Omega ^1   
 . }
\end{align*}   
Thus,  we have $d-A = \Phi ^{-1} \circ (1 \otimes d) \circ \Phi$,  
and a solution is given by 
$\begin{bmatrix} g_1 \\ \vdots \\ g_{N} \end{bmatrix} 
=\Phi \circ \int \circ \Phi ^{-1} 
\left( \begin{bmatrix} \eta _1 \\ \vdots \\ \eta _N \end{bmatrix} \right) $. 
The isomorphism $\Phi$ is locally given by 
$\Phi (  \varsigma _1 f_1 + \cdots +  \varsigma _N f_N ) 
=  \varsigma _1 \otimes f_1+ \cdots +  \varsigma _N \otimes f_N$, 
where $\{ \varsigma _1 , \ldots , \varsigma _N \}$ is a (local) 
basis of $\mathcal{L}$. 
(The matrix corresponding to $\Phi$ is nothing but the {\it Wronski matrix}.) 
In case $N=1$ and $A=-\frac{d \varsigma }{\varsigma}$, 
where $\varsigma = (t-x_0 ) ^{\alpha _0} (t-x_1 ) ^{\alpha _1} \cdots 
(t-x_{n-1} ) ^{\alpha _{n-1}}$ and $t$ is a standard coordinate function on 
$\Bbb{C}$, 
a solution $g$ is given by 
\begin{align} \label{sol1} 
 g = \frac{1}{\varsigma} \int \varsigma \eta . 
\end{align} 
This expression is, however, only valid locally, because $\varsigma$ is a multi-valued function. 
On the domain $U_i$ defined as in figure {\ref{fig1}} 
(which is homotopic to a punctured disc),  
the functions (\ref{sol1}) can be defined single-valued. 
In this paper, 
we shall realize a single-valued solution by finding 
a successful integration path (which has been formulated as a  
{\it regularization of paths} by Aomoto{\cite{aomoto}}.) 
One can call this result the {\it twisted Poincar{\'e} lemma}. 

Using our method, 
we can describe explicitly the isomorphism 
between the twisted de Rham cohomology and 
the {\v{C}}ech cohomology with its coefficients in $\mathcal{L}$. 
A {\v{C}}ech $1$-cocycle $(s_{ij} ) _{ij}$ for the covering 
$\frak{U}  = \{ U_i \}$ is given by 
$s_{ij} = \frac{a_{ij}}{\varsigma} $, where $a_{ij} \in \Bbb{C}$. 
For a twisted de Rham cohomology class $[\eta ]$, 
the corresponding {\v{C}}ech $1$-cocycle is given by 
\begin{align} \label{int_0}
a_{ij} = \int _{reg (i,j)} \varsigma \eta , 
\end{align}     
where $reg(i,j)$ is a regularization of a path connecting $x_i$ to $x_j$. 
This formula (\ref{int_0}) is nothing but a hypergeometric integral. 

Here we encounter with the Wronski matrix. 
The twisted de Rham cohomology is parameterized analytically by 
$( x_0 , \ldots , x_{n-1} ) \in S$, 
where $S$ is a configuration space of $n$-points on $\Bbb{C}$, 
that is, 
$S= \Bbb{C} ^n \setminus \bigcup _{i \not= j} \{ x_i = x_j \}$. 
It forms an analytic vector bundle $\mathcal{H} ^1$, 
which has a natural integrable connection $\nabla$ 
({\it Gau\ss -Manin connection}.) 
On the other hand, 
the {\v{C}}ech cohomology forms another analytic vector bundle 
$\check{\mathcal{H}} ^1$.  
Using a {\it standard} basis  
$\{ e_1 , \ldots , e_{n-1} \}$ of the {\v{C}}ech cohomology, 
we shall trivialize $\check{\mathcal{H}} ^1$ 
to  
$( \Bbb{C} e_1 \oplus \cdots \oplus \Bbb{C} e_{n-1} ) \otimes _{\Bbb{C}} 
\mathcal{O} _{V_{i_0 \ldots i_{n-1}}} 
$ 
on an open set 
$V _{i_0 \ldots i_{n-1}}:= \{ \mathrm{Re} x _{i_0} < \mathrm{Re} x_{i_1} 
< \cdots < \mathrm{Re} x_{i_{n-1}}  \}$.  
The {\v{C}}ech-de Rham isomorphism leads the following commutative 
diagram: 
\begin{align*} 
\xymatrix{
\mathcal{H} ^1 \big| _{V_{i_0 \ldots i_{n-1}}} \ar[rr] ^{\cong} \ar[d] _{\nabla} && 
( \Bbb{C} e_1 \oplus \cdots \oplus \Bbb{C} e_{n-1} ) \otimes _{\Bbb{C}}  
\mathcal{O} _{V_{i_0 \ldots i_{n-1}}} 
\ar[d] ^{1 \otimes d} 
\\ 
\mathcal{H} ^1 \big| _{V_{i_0 \ldots i_{n-1}}} 
\otimes _{\mathcal{O} _{V_{i_0 \ldots i_{n-1}}}} 
\Omega _{V_{i_0 \ldots i_{n-1}}} ^1 
\ar[rr] ^{\cong} && 
( \Bbb{C} e_1 \oplus \cdots \oplus \Bbb{C} e_{n-1} ) \otimes _{\Bbb{C}} 
\Omega _{V_{i_0 \ldots i_{n-1}}} ^1  
} 
\end{align*} 
This result explains that  
the matrix corresponding to the {\v{C}}ech-de Rham isomorphism 
is the Wronski matrix, 
and that 
{\v{C}}ech cocycles 
$\{e_1 ,\ldots , e_{n-1} \}$ 
give  solutions of the hypergeometric system 
$\nabla g=0$.

\section{Twisted Poincar{\'e} lemma} 
For an $n$-tuple 
$x= ( x_0 , x_1 ,\ldots x_{n-1} )$ of points on $\Bbb{C}$, 
let $X_x$ be the punctured projective line: $X_x = \Bbb{P} ^1 \setminus 
\{ x_0 , x_1 , \ldots , x_n = \infty \}$. 
We shall consider a {\it twisted differential}:    
\begin{align} {\label{twisted_diff}} 
 d + \omega 
: \mathcal{O} _{X_x} \longrightarrow \Omega _{X_x} ^1  ,   
\end{align}   
where 
\begin{align} {\label{conn_form}} 
\omega  := \alpha _0 \frac{dt}{t-x_0} 
+ \alpha _1 \frac{dt}{t-x_1} 
+ \cdots 
+ \alpha _{n-1} \frac{dt}{t-x_{n-1}}  
\end{align} 
and we assume that $\alpha _i \not\in \Bbb{Z}$. 
Let $U_i$ be the open set in $X _{x}$ defined by removing 
  $n-1$ lines $\{ l_j \} _{j=0 , 1 , \ldots , n-1 , j \not =i}$ 
from $X_x$,  
where $l_j$ is a path connecting the point $x_j$ and $\infty$;
for instance,   
$l_j := \left\{  t \in \Bbb{C} \ \left| 
\arg ( t - x_j ) = \pi /2 
\right. \right\}$.  
\begin{figure}[h]
\begin{center}  
\unitlength 0.1in
\begin{picture}( 25.7000, 14.3000)(  4.3000,-15.3000)
%
\special{pn 20}%
\special{ar 958 1442 60 60  0.0000000 6.2831853}%
%
\special{pn 20}%
\special{ar 1542 1442 58 58  0.0000000 6.2831853}%
%
\special{pn 20}%
\special{ar 2242 1442 58 58  0.0000000 6.2831853}%
%
\special{pn 20}%
\special{ar 2942 1442 58 58  0.0000000 6.2831853}%
%
\special{pn 20}%
\special{ar 1892 158 58 58  0.0000000 6.2831853}%
%
\special{pn 20}%
\special{pa 958 1384}%
\special{pa 958 800}%
\special{da 0.070}%
\special{pa 958 800}%
\special{pa 958 800}%
\special{da 0.070}%
%
\special{pn 20}%
\special{pa 1542 1384}%
\special{pa 1542 800}%
\special{da 0.070}%
\special{pa 2942 1384}%
\special{pa 2942 800}%
\special{da 0.070}%
%
\special{pn 20}%
\special{pa 958 568}%
\special{pa 970 536}%
\special{pa 984 506}%
\special{pa 996 474}%
\special{pa 1010 444}%
\special{pa 1024 416}%
\special{pa 1038 388}%
\special{pa 1054 362}%
\special{pa 1072 338}%
\special{pa 1092 316}%
\special{pa 1112 294}%
\special{pa 1134 276}%
\special{pa 1158 258}%
\special{pa 1182 242}%
\special{pa 1208 230}%
\special{pa 1236 216}%
\special{pa 1264 206}%
\special{pa 1292 196}%
\special{pa 1322 188}%
\special{pa 1354 180}%
\special{pa 1386 174}%
\special{pa 1420 170}%
\special{pa 1452 166}%
\special{pa 1486 162}%
\special{pa 1522 160}%
\special{pa 1558 158}%
\special{pa 1594 156}%
\special{pa 1630 156}%
\special{pa 1666 156}%
\special{pa 1704 156}%
\special{pa 1740 156}%
\special{pa 1778 158}%
\special{pa 1816 158}%
\special{pa 1834 158}%
\special{sp 0.070}%
%
\special{pn 20}%
\special{pa 2942 568}%
\special{pa 2922 542}%
\special{pa 2900 518}%
\special{pa 2878 494}%
\special{pa 2856 470}%
\special{pa 2834 446}%
\special{pa 2812 422}%
\special{pa 2788 400}%
\special{pa 2766 380}%
\special{pa 2742 358}%
\special{pa 2718 340}%
\special{pa 2692 322}%
\special{pa 2666 306}%
\special{pa 2640 290}%
\special{pa 2612 276}%
\special{pa 2584 262}%
\special{pa 2556 252}%
\special{pa 2526 240}%
\special{pa 2496 230}%
\special{pa 2466 222}%
\special{pa 2436 214}%
\special{pa 2404 206}%
\special{pa 2374 200}%
\special{pa 2342 194}%
\special{pa 2308 188}%
\special{pa 2276 184}%
\special{pa 2244 180}%
\special{pa 2210 176}%
\special{pa 2176 172}%
\special{pa 2144 170}%
\special{pa 2110 168}%
\special{pa 2076 166}%
\special{pa 2042 164}%
\special{pa 2008 162}%
\special{pa 1974 160}%
\special{pa 1940 158}%
\special{pa 1938 158}%
\special{sp 0.070}%
%
\special{pn 20}%
\special{pa 1542 568}%
\special{pa 1554 536}%
\special{pa 1566 506}%
\special{pa 1578 476}%
\special{pa 1590 446}%
\special{pa 1604 418}%
\special{pa 1620 390}%
\special{pa 1636 362}%
\special{pa 1656 338}%
\special{pa 1676 314}%
\special{pa 1698 292}%
\special{pa 1722 270}%
\special{pa 1746 250}%
\special{pa 1772 232}%
\special{pa 1800 212}%
\special{pa 1828 194}%
\special{pa 1846 182}%
\special{sp 0.070}%
\put(8.9000,-15.3000){\makebox(0,0)[lt]{$x_0$}}%
\put(14.9000,-15.3000){\makebox(0,0)[lt]{$x_1$}}%
\put(22.0000,-15.3000){\makebox(0,0)[lt]{$x_i$}}%
\put(29.0000,-15.3000){\makebox(0,0)[lt]{$x_{n-1}$}}%
\put(19.0000,-2.5000){\makebox(0,0)[lt]{$\infty$}}%
\put(8.8000,-8.4000){\makebox(0,0)[rt]{$l_0$}}%
\put(14.8000,-8.4000){\makebox(0,0)[rt]{$l_1$}}%
\put(28.8000,-8.4000){\makebox(0,0)[rt]{$l_{n-1}$}}%
\end{picture}%
\end{center} 
\caption{$U_i$}  \label{fig1} 
\end{figure} 
\begin{thm}[Twisted Poincar{\' e} lemma] \label{thm_P} 
Let $\eta$ be a (single-valued) holomorphic $1$-form on $U_i$.  
Then there exists a (single-valued) holomorphic function $g$ on $U_i$ 
such that 
\begin{align} {\label{diff_eq}} 
(d+\omega ) g = \eta . 
\end{align} 
\end{thm}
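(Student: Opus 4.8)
The plan is to solve the equation explicitly by the formula \eqref{sol1}, that is, to set $g = \varsigma^{-1} \int \varsigma \eta$, and then to check that a suitable choice of the path of integration makes $g$ single-valued on $U_i$. Here $\varsigma = (t-x_0)^{\alpha_0} \cdots (t-x_{n-1})^{\alpha_{n-1}}$, so that $\omega = d\varsigma/\varsigma$ and $(d+\omega)(\varsigma^{-1} G) = \varsigma^{-1} dG$ for any local primitive $G$ of $\varsigma \eta$; thus the equation $(d+\omega)g = \eta$ reduces to $dG = \varsigma\eta$. On the simply connected set $U_i$ (homotopic to a punctured disc, hence not simply connected — so this already needs care) one fixes a branch of $\varsigma$ and a base point $p \in U_i$, and defines $G(t) = \int_p^t \varsigma\eta$ along a path in $U_i$. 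The issue is that $U_i$ has nontrivial $\pi_1$, generated by a small loop around $x_i$, so $G$ can have a period; I would show this period vanishes.

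The key steps, in order, are: (1) Fix a branch of $\varsigma$ on $U_i$; this is possible precisely because $U_i$ is obtained by deleting the cuts $l_j$ for $j \neq i$, so the only nontrivial loop is the one encircling $x_i$, and around that loop $\varsigma$ is multiplied by $e^{2\pi i \alpha_i} \neq 1$. (2) Show that $\varsigma\eta$, although $\eta$ is holomorphic on $U_i$, has no residue obstruction for being exact on $U_i$: the relevant period is $\int_{\gamma_i} \varsigma\eta$ where $\gamma_i$ is a loop around $x_i$. (3) Observe that $x_i$ is \emph{not} a puncture of $X_x$ relevant to $U_i$ in the naive sense — actually $x_i$ has been kept, so $\eta$ is holomorphic near $x_i$; but $\varsigma$ has a branch point there. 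I would compute $\int_{\gamma_i}\varsigma\eta$ by shrinking $\gamma_i$ to $x_i$: since $\varsigma = (t-x_i)^{\alpha_i} \times (\text{unit})$ and $\eta$ is holomorphic, $\varsigma\eta = (t-x_i)^{\alpha_i} h(t)\,dt$ with $h$ holomorphic near $x_i$, and such an integral over a shrinking loop based at a point tends to $0$ when $\mathrm{Re}\,\alpha_i > -1$; for general $\alpha_i \notin \mathbb{Z}$ one uses \emph{Aomoto's regularization} of the path near $x_i$ (and near $\infty$) to assign a finite value and to arrange that the regularized loop integral is $\frac{e^{2\pi i\alpha_i}-1}{2\pi i}$ times a convergent quantity, forcing the period of the regularized primitive to vanish. (4) Conclude that $G(t) = \int_{reg} \varsigma\eta$ is a well-defined single-valued holomorphic function on $U_i$, and hence $g = G/\varsigma$ is single-valued holomorphic on $U_i$ and satisfies $(d+\omega)g = \eta$.

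The main obstacle is step (3): making sense of $\int \varsigma\eta$ when $\mathrm{Re}\,\alpha_i \leq -1$ (or $\mathrm{Re}\,\alpha_i \leq -1$ at $\infty$), where the naive integral diverges at the endpoints. This is exactly where Aomoto's regularization of paths enters: one replaces a path terminating at (or looping around) a branch point by a combination of a small loop and a segment, weighted so that the divergent contributions cancel, yielding a finite "regularized integral" that still serves as a primitive of $\varsigma\eta$ and whose monodromy around $x_i$ is trivial. I would first carry out the argument cleanly in the convergent range $\mathrm{Re}\,\alpha_i > -1$ for all $i$ (including at $\infty$, where the condition becomes $\mathrm{Re}(\alpha_0 + \cdots + \alpha_{n-1}) < \ldots$), and then invoke the regularization to extend to all $\alpha_i \notin \mathbb{Z}$, noting that $g = G/\varsigma$ is manifestly independent of the regularization parameter once single-valuedness is established.
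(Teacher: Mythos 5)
Your overall shape is right (variation of constants, $g=\varsigma^{-1}\int\varsigma\eta$, with the only obstruction coming from the loop around $x_i$), but the mechanism you propose for killing that obstruction does not work, and it rests on a misreading of the setup. First, $x_i$ is \emph{not} kept: $X_x=\Bbb{P}^1\setminus\{x_0,\ldots,x_{n-1},\infty\}$ removes every $x_j$, so $U_i$ is a punctured domain with puncture at $x_i$ and $\eta$ may have an arbitrary singularity there (e.g.\ $\eta=\frac{dt}{t-x_i}$, or worse). Second, and more seriously, your step (3) tries to prove that the period $P=\int_{\sigma_i}\varsigma_{\sigma_i}\eta$ vanishes; it does not vanish in general, and no regularization can make it vanish: these loop periods are exactly the hypergeometric integrals $\int_{reg(i,j)}\varsigma\eta$ that the paper later shows give a \emph{nonzero} (indeed injective) map to \v{C}ech cohomology, so an argument forcing $P=0$ would contradict the rest of the paper. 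Your convergence worries ($\mathrm{Re}\,\alpha_i>-1$, behaviour at $\infty$) are also beside the point here: every integration path in the correct proof is a compact path or loop inside $U_i$, staying away from all punctures, so nothing diverges and no condition beyond $\alpha_i\notin\Bbb{Z}$ is needed.

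The paper's proof resolves the monodromy differently: one does not show the period is zero, one \emph{absorbs} it by the right choice of the constant of integration. Define
\begin{align*}
g(t)=\varsigma_{\gamma_t}^{-1}\int_{reg_i\gamma_t}\varsigma\eta,
\qquad
reg_i\gamma_t=\gamma_t\otimes\varsigma_{\gamma_t}+\frac{1}{c_i-1}\,\sigma_i\otimes\varsigma_{\sigma_i},
\qquad c_i=e^{2\pi\sqrt{-1}\,\alpha_i}.
\end{align*}
Under continuation around $x_i$ the naive primitive transforms by $I\mapsto c_iI+P$ while $\varsigma_{\gamma_t}\mapsto c_i\varsigma_{\gamma_t}$; adding the fixed twisted chain $\frac{1}{c_i-1}\sigma_i\otimes\varsigma_{\sigma_i}$ (i.e.\ the constant $\frac{P}{c_i-1}$, a multiple of the multivalued homogeneous solution) makes the bracketed integral transform exactly by the factor $c_i$, so the ratio $g$ is single-valued; the paper verifies this by the chain identity $c_i^{-1}\,reg_i\gamma_t'-reg_i\gamma_t=0$. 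This is what ``regularization'' means here --- an algebraic correction exploiting $c_i\neq 1$ --- not a device for assigning finite values to divergent endpoint integrals. To repair your proposal you would replace steps (2)--(3) by this fixed-point/absorption argument and drop the discussion of convergence ranges entirely.
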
 
We shall prove this theorem in {\S}{\ref{int}}.

\section{Integrations over twisted chains} \label{int} 
To prove Theorem {\ref{thm_P}}, we shall introduce integrations 
over twisted chains. 

The twisted differential {(\ref{twisted_diff})} 
is locally equal to $\varsigma ^{-1} \circ d \circ \varsigma $, 
where $\varsigma ^{-1}$ is a local solution of the equation 
$(d + \omega ) s =0$. 
Thus a solution $g$ of the equation {(\ref{diff_eq})} 
is locally given by  
\begin{align} \label{local_sol}
g = \varsigma ^{-1} \int  \varsigma \eta .    
\end{align} 
In order for this expression to make sense globally, it is necessary to define  integrations of a multi-valued function. 
A multi-valuedness of $\varsigma$ is controlled by the following local system: 
\begin{align*} 
\mathcal{L} ^{\vee} := \mathrm{Ker} (d-\omega) ,   
\end{align*} 
which is dual to the local system 
$\mathcal{L} := \mathrm{Ker} (d+\omega )$. 
Let $U$ be an open set in $X_x$ defined by removing $n$ lines 
$\{ l_j \} _{j=0 , 1 , \ldots , n-1 }$ 
from $\Bbb{P} ^1$. 
We can take a non-zero section $\varsigma$ of $\mathcal{L} ^{\vee}$ on $U$ 
and fix it once for all. 
To determine sections of $\mathcal{L} ^{\vee}$ on paths in $X_x$, 
we take a point $p \in U$. 
For a path $\gamma$ in $X_x$ whose initial point is $p$, 
we denote by $\varsigma _{\gamma}$ the analytic continuation of $\varsigma$ along 
$\gamma$. 
\begin{defn}[twisted chain] 
 A twisted chain is a chain with its coefficients in $\mathcal{L} ^{\vee}$, 
that is, 
a linear combination of 
$\{ \gamma \otimes s _{\gamma} \} _{\gamma }$, 
where 
$\gamma$ is a singular $1$-simplex (i.e. a path) and 
$s _{\gamma}$ is a local section of $\mathcal{L} ^{\vee}$ on $\gamma$. 
\end{defn} 
\begin{defn}[regularization] \label{regularization} 
Let $\gamma$ be a path on $U_i$ 
whose initial point is $p$. 
The regularization of $\gamma$ is defined by 
\begin{align*} 
reg _i \gamma := \gamma \otimes \varsigma _{\gamma} 
+ \frac{1}{c_i -1} \sigma _i \otimes \varsigma _{\sigma _i} ,  
\end{align*} 
where $c_i := \exp ( 2\pi \sqrt{-1} \alpha _i)$ and  
$\sigma _i$ is a loop around $x_i$ 
whose initial point is $p$. 
\end{defn} 
\begin{defn}[integration over twisted chain] 
Let $\gamma$  be a path whose initial point is $p$.  
The integration over $\gamma \otimes \varsigma _{\gamma}$ 
is defined by 
\begin{align*} 
\int _{\gamma \otimes \varsigma _{\gamma}} \varsigma \eta  
:= \int _{\gamma} \varsigma _{\gamma} \eta . 
\end{align*} 
\end{defn}

\begin{proof}[proof of Theorem {\ref{thm_P}}] 
To construct a global solution of the equation (\ref{diff_eq}), 
we need to continue ({\ref{local_sol}}) analytically on the whole $U_i$. 
In order to achieve this, we consider integrations over regularized paths 
in Definition {\ref{regularization}}. 
\begin{align} \label{int_over_reg}
g (t) := \varsigma _{\gamma _t} ^{-1} \int _{reg _i \gamma _t} \varsigma \eta ,   
\end{align}  
where $\gamma _t$ is a path connecting $p$ to $t$ in $U_i$.   
We shall prove that $g$ is well-defined, that is, 
$g$ is defined independently of a choice of paths $\gamma _t$. 
We take another path $\gamma _t ^{\prime}$. 
It is sufficient to prove 
\begin{align*} 
 \varsigma _{\gamma _t ^{\prime} } ^{-1} \int _{reg _i \gamma _t ^{\prime}} 
\varsigma \eta 
-\varsigma _{\gamma _t} ^{-1} \int _{reg _i \gamma _t} \varsigma \eta = 0
\end{align*} 
in case 
$ \gamma _t ^{-1} \circ \gamma _t ^{\prime} $ 
is homotopic in $U_i$ to $\sigma _i$. 
Note that 
$\varsigma _{\gamma _t ^{\prime}} = c_j \varsigma _{\gamma _t}$.  
\begin{align*} 
\varsigma _{\gamma _t ^{\prime} } ^{-1} \int _{reg _i \gamma _t ^{\prime}} 
\varsigma \eta 
-\varsigma _{\gamma _t} ^{-1} \int _{reg _i \gamma _t} \varsigma \eta 
= \varsigma _{t} ^{-1} 
 \int _{ c_j ^{-1} reg _i \gamma _t ^{\prime} - reg _i \gamma _t } \varsigma \eta , 
\end{align*} 
where 
\begin{align*}  
c_j ^{-1} reg _i \gamma _t ^{\prime} - reg _i \gamma _t 
& = c_j ^{-1} \gamma _t ^{\prime} \otimes \varsigma _{\gamma _t ^{\prime}} 
+ \frac{c_j ^{-1}}{c_j -1} \sigma _i \otimes \varsigma _{\sigma _i} 
- \gamma _t \otimes \varsigma _{\gamma _t} 
- \frac{1}{c_j -1} \sigma _i \otimes \varsigma _{\sigma _i} \\ 
& =  c_j ^{-1} \left( \gamma _t  
\otimes c_j \varsigma _{\gamma _t } 
+\sigma _i \otimes \varsigma _{\sigma _i} \right) 
+ \frac{c_j ^{-1}}{c_j -1} \sigma _i \otimes \varsigma _{\sigma _i} 
- \gamma _t \otimes \varsigma _{\gamma _t} 
- \frac{1}{c_j -1} \sigma _i \otimes \varsigma _{\sigma _i}  \\ 
& = 0 . 
\end{align*} 
We have thus proved the theorem. 
\end{proof}

\section{Twisted {\v{C}}ech-de Rham isomorphism} 
The twisted differential ({\ref{twisted_diff}}) 
defines a {\it{twisted de Rham complex}}: 
\begin{align} \label{dRcomplex} 
0 \longrightarrow 
\Gamma ( X_x , \mathcal{O} _{X_x}) \overset{d+\omega}{\longrightarrow}  
\Gamma ( X_x , \Omega _{X_x} ^1 ) \longrightarrow 
0 .   
\end{align} 
On the other hand, 
the {\it {\v{C}}ech complex with its coefficients in $\mathcal{L}$}
associated to the covering $\frak{U} = \{ U_i \}$ 
given by 
\begin{align} \label{Ccomplex}  
0 \longrightarrow 
\bigoplus _i \Gamma ( U_i ,\mathcal{L} ) 
\overset{\partial ^0}{\longrightarrow} 
\bigoplus _{i<j} \Gamma ( U_i \cap U_j , \mathcal{L}  ) 
\overset{\partial ^1}{\longrightarrow}  
\bigoplus _{i<j<k} \Gamma ( U_i \cap U_j \cap U_k , \mathcal{L} ) 
\longrightarrow \cdots , 
\end{align}  
where 
\begin{align*} 
\left( \partial ^0 ( s_i ) _i \right) _{ij} 
= s_j \big| _{U_i \cap U_j} -s_i \big| _{U_i \cap U_j} , 
\quad 
\left( \partial ^1 ( s_{ij} ) _{ij} \right) _{ijk} 
= s_{jk} \big| _{U_i \cap U_j \cap U_{k}} -s_{ik} \big| _{U_i \cap U_j \cap U_{k}} 
+ s_{ij} \big| _{U_i \cap U_j \cap U_{k}} .     
\end{align*} 
The twisted Poincar{\'{e}} lemma (Theorem {\ref{thm_P}}) 
tells us that 
the first twisted de Rham cohomology 
$H^1 _{d+\omega} ( X_x )$ 
(defined by the complex ({\ref{dRcomplex}}))
is isomorphic to 
the first {\v{C}}ech cohomology 
$H ^1 ( \frak{U} , \mathcal{L} )$ 
(defined by the complex ({\ref{Ccomplex}}).) 
\begin{thm}  \label{Cech_deR_isom}
The morphism 
$\Phi : H^1 _{d+\omega} ( X_x )
\longrightarrow 
 H ^1 ( \frak{U} , \mathcal{L} )$ 
given by 
\begin{align*} 
\Phi ( \eta ) = 
\left( -\frac{1}{\varsigma} \int _{reg(i,j)} \varsigma \eta \right) _{ij}  
\end{align*}  
is well-defined and an injection, 
where 
\begin{align*} 
reg(i,j) =  \frac{1}{c_i -1} \sigma _i \otimes \varsigma _{\sigma _i} 
-\frac{1}{c_j -1} \sigma _j \otimes \varsigma _{\sigma _j} . 
\end{align*}  
\end{thm}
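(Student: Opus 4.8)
The plan is to reduce both assertions to the explicit local primitives furnished by Theorem~\ref{thm_P}, together with one short integral computation. Throughout I write $\tau_m := \frac{1}{c_m-1}\,\sigma_m \otimes \varsigma_{\sigma_m}$, so that $reg(i,j) = \tau_i - \tau_j$. First I would record the elementary geometry of the cover: for $i \neq j$ one has $U_i \cap U_j = U$, since both sets omit every slit $l_0,\dots,l_{n-1}$ (chosen pairwise disjoint, so that $\{U_i\}$ still covers $X_x$), and $U$ is simply connected; hence $\varsigma$ is single-valued and nowhere zero on $U$ and $\Gamma(U,\mathcal{L}) = \Bbb{C}\,\varsigma^{-1}$. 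Since analytic continuation once around $x_m$ multiplies $\varsigma$ by $c_m$, the twisted boundary of $\sigma_m \otimes \varsigma_{\sigma_m}$ equals $(c_m-1)\,p\otimes\varsigma(p)$, so $\partial\,reg(i,j) = 0$: thus $reg(i,j)$ is a twisted $1$-cycle, $\int_{reg(i,j)}\varsigma\eta$ is a well-posed complex number, each $s_{ij} := -\varsigma^{-1}\int_{reg(i,j)}\varsigma\eta$ is a genuine section of $\mathcal{L}$ over $U_i \cap U_j$, and $\eta \mapsto (s_{ij})_{ij}$ is $\Bbb{C}$-linear.

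For well-definedness the cocycle relation is formal: writing $I_m := \int_{\tau_m}\varsigma\eta$ one has $s_{mn} = -\varsigma^{-1}(I_m - I_n)$, so on $U_i \cap U_j \cap U_k = U$ the alternating sum $s_{jk} - s_{ik} + s_{ij} = -\varsigma^{-1}\big((I_j-I_k)-(I_i-I_k)+(I_i-I_j)\big)$ telescopes to $0$. To descend to cohomology it then suffices, by linearity, to check that an exact form maps to the zero cocycle: if $\eta = (d+\omega)f$ with $f \in \Gamma(X_x,\mathcal{O}_{X_x})$, the relation $d\varsigma = \omega\varsigma$ (valid for $\varsigma$ and for each continuation $\varsigma_{\sigma_m}$) gives $\varsigma_{\sigma_m}(d+\omega)f = d(\varsigma_{\sigma_m}f)$ along $\sigma_m$, and Stokes yields $I_m = \frac{1}{c_m-1}\big(c_m\varsigma(p)f(p) - \varsigma(p)f(p)\big) = \varsigma(p)f(p)$, independent of $m$; hence every $s_{mn}$ vanishes. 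Thus $\Phi$ is a well-defined homomorphism $H^1_{d+\omega}(X_x) \to H^1(\frak{U},\mathcal{L})$.

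For injectivity, suppose $\Phi([\eta]) = 0$, so that $s_{ij} = t_j|_{U_i\cap U_j} - t_i|_{U_i\cap U_j}$ for some $t_i \in \Gamma(U_i,\mathcal{L})$. By Theorem~\ref{thm_P}, $\eta$ has on $U_i$ the single-valued primitive $g_i \in \Gamma(U_i,\mathcal{O}_{X_x})$ of formula~(\ref{int_over_reg}), and the value $g_i(t)$ does not depend on the path, so for $t \in U$ I may compute it using a path $\gamma_t$ from $p$ to $t$ lying in $U$ (which is admissible for $reg_i$ and for $reg_j$ at once). The crux is then the identity $g_j - g_i = s_{ij}$ on $U_i \cap U_j = U$: single-valuedness of $\varsigma$ on $U$ gives $\varsigma_{\gamma_t} = \varsigma$, while $reg_i\gamma_t - reg_j\gamma_t = (\gamma_t \otimes \varsigma_{\gamma_t} + \tau_i) - (\gamma_t \otimes \varsigma_{\gamma_t} + \tau_j) = reg(i,j)$, so $g_j(t) - g_i(t) = -\varsigma(t)^{-1}\int_{reg(i,j)}\varsigma\eta = s_{ij}$ --- the very cancellation that appears in the proof of Theorem~\ref{thm_P}. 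Granting this, $g_j - g_i = t_j - t_i$ forces $g_i - t_i = g_j - t_j$ on every overlap $U_i \cap U_j$, so, as $\{U_i\}$ covers $X_x$, these patch to a single $g \in \Gamma(X_x,\mathcal{O}_{X_x})$ with $(d+\omega)g = (d+\omega)g_i = \eta$ on each $U_i$; hence $[\eta] = 0$ in $H^1_{d+\omega}(X_x)$, and $\Phi$ is injective.

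The step I expect to be the main obstacle is precisely this crux identity $g_j - g_i = s_{ij}$, i.e.\ identifying the prescribed hypergeometric-integral cocycle with the difference of the path-integral primitives of Theorem~\ref{thm_P}. The only genuine care required is to run the comparison along a path contained in the simply connected intersection $U$, where $\varsigma$ does not ramify, and to notice that the normalization $\frac{1}{c_i-1}$ in Definition~\ref{regularization} is exactly what makes the two regularizations $reg_i\gamma_t$ and $reg_j\gamma_t$ of one and the same path differ by the closed twisted $1$-cycle $reg(i,j)$. (Surjectivity of $\Phi$---hence the {\v{C}}ech--de Rham isomorphism announced in the introduction---would then follow from the dimension count $\dim H^1_{d+\omega}(X_x) = \dim H^1(\frak{U},\mathcal{L}) = n-1$, but is not part of the stated theorem.)
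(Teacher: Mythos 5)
Your proof is correct, and its computational heart --- the chain-level identity $reg_j\gamma_t - reg_i\gamma_t = -\,reg(i,j)$ for a path $\gamma_t$ lying in $U = U_i\cap U_j$, applied to the primitives $g_i$ of formula (\ref{int_over_reg}) --- is exactly the identity on which the paper's own proof turns. What differs is the packaging. The paper first sets up the {\v{C}}ech--de Rham double complex whose columns are exact by Theorem \ref{thm_P} and whose rows are exact by definition, defines $\Phi$ abstractly as $\partial_0^0\circ(d+\omega)^{-1}$, and appeals to the standard diagram chase for well-definedness and bijectivity; only afterwards does it compute the explicit cocycle, which is where the identity above appears. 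You instead verify everything directly on the explicit formula: the cocycle condition by telescoping, descent to cohomology by a Stokes computation showing that $\eta=(d+\omega)f$ gives $I_m=\varsigma(p)f(p)$ independently of $m$ (a step the paper never writes out, since the chase gives it for free), and injectivity by gluing the corrected primitives $g_i-t_i$ into a global solution of $(d+\omega)g=\eta$ --- which is precisely the content of the diagram chase made explicit. Your route is more elementary and self-contained (it never invokes the double complex), at the price of not yielding surjectivity, which the paper's chase delivers at once and which you rightly leave to the later dimension count via the Wronski determinant, in agreement with the theorem as stated. Incidentally, in your injectivity step you could also invoke $\Gamma(U_i,\mathcal{L})=0$ (as the paper does in the next section), so the correction terms $t_i$ vanish automatically; your more general patching argument works regardless, and your preliminary observations (that $reg(i,j)$ is a twisted cycle and that the slits must be pairwise disjoint for $\{U_i\}$ to cover $X_x$) are accurate clarifications of hypotheses the paper leaves implicit.
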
 
\begin{rem}  \label{rem1}
The open set $U_{i_0} \cap U_{i_1} \cap \cdots \cap U_{i_k}$ 
coincides with $U$ in case $k >0$. 
Thus 
$\Gamma (U_{i_0} \cap U_{i_1} \cap \cdots \cap U_{i_k} , \mathcal{L} ) 
= \Bbb{C} \frac{1}{\varsigma}$. 
\end{rem} 
\begin{rem} 
Actually, $\Phi$ is an isomorphism. (See Corollary {\ref{cor}}.)
\end{rem} 
\begin{proof}  
We have the following commutative diagram: 
\begin{align*} 
\xymatrix{ 
&& 0  & 0   \\ 
0 \ar[r] & 
\Gamma ( X_x , \Omega _{X_x} ^1 )  \ar[r] & 
\bigoplus _i  \Gamma ( U_i , \Omega _{X_x} ^1 ) \ar[r] ^{\partial _1 ^0} \ar[u] 
&  Z^1 ( \frak{U} , \Omega _{X_x} ^1 )    \ar[u]  \\ 
0 \ar[r] & 
\Gamma ( X_x , \mathcal{O} _{X_x}  ) \ar[r] ^{\iota} \ar[u] & 
\bigoplus _i  \Gamma ( U_i , \mathcal{O} _{X_x}  ) \ar[r] ^{\partial _0 ^0} 
 \ar[u] ^{d+\omega} & 
Z^1 ( \frak{U} , \mathcal{O} _{X_x}  )   \ar[u] ^{d+\omega}  \\ 
&&  \bigoplus _i \Gamma ( U_i , \mathcal{L}  ) \ar[r] \ar[u] ^{\iota ^{\prime}}
&  Z^1 ( \frak{U} , \mathcal{L}   ) \ar[u]  \\ 
&& 0  \ar[u] & 0 \ar[u] 
} 
\end{align*} 
In this diagram, 
both two vertical sequences are exact due to the twisted Poincar{\'{e}} 
lemma (Theorem {\ref{thm_P}} ) 
and 
both two horizontal sequences are exact by definition. 
Here we have 
\begin{align*} 
H^1 _{d+\omega } (X_x) & \cong 
\mathrm{Ker} \partial _1 ^0 
\big/  \mathrm{Im} (d+\omega) \circ \iota   \\ 
H^1 (\frak{U} , \mathcal{L} ) & \cong 
\mathrm{Ker} (d+\omega ) 
\big/  \mathrm{Im} \partial _0 ^0 \circ \iota ^{\prime} 
\end{align*}  
and $\Phi$ should be defined by 
$\partial _0 ^0 \circ (d+\omega ) ^{-1}$. 
A standard argument by diagram chasing tells us that 
$\Phi$ is well-defined and an isomorphism. 
For a $1$-form $\eta$, we  calculate $\Phi (\eta )$ explicitly 
by using formula ({\ref{int_over_reg}}) in the proof of the twisted Poincar{\'{e}} 
lemma: 
\begin{align*} 
\Phi (\eta ) &= 
 \partial _0 ^0 \left( 
\frac{1}{\varsigma _{\gamma _t}} \int _{reg _i \gamma _t} \varsigma \eta 
\right) _i   \\ 
&= 
\left( 
\frac{1}{\varsigma _{\gamma _t}} \int _{reg _j \gamma _t} \varsigma \eta 
- 
\frac{1}{\varsigma _{\gamma _t}} \int _{reg _i \gamma _t} \varsigma \eta 
\right) _{ij} 
\end{align*}  
Note that $\gamma  _t$ is on $U$. 
Thus $\varsigma _{\gamma _t} =\varsigma$ and we have 
\begin{align*} 
reg _j \gamma _t - reg _i \gamma _t &= 
\gamma _t \otimes \varsigma _{\gamma _t} 
+ \frac{1}{c_j -1} \sigma _ \otimes \varsigma _{\sigma _j}  
-\gamma _t \otimes \varsigma _{\gamma _t} 
- \frac{1}{c_i -1} \sigma _i \otimes \varsigma _{\sigma _i} \\ 
&= -reg (i,j) .   
\end{align*} 
We have thus proved the theorem. 
\end{proof}

\section{Explicit formula of twisted {\v{C}}ech-de Rham isomorphism} 
To describe the {\v{C}}ech-de Rham isomorphism $\Phi$ more explicitly 
in matrix form, 
we give generators of cohomology. 
\begin{prop} 
The first cohomology $H ^1 ( \frak{U} , \mathcal{L} )$ of  
the {\v{C}}ech complex  
is generated by $e_1 , \ldots , e_{n-1}$ which are defined by  
\begin{align*} 
e_k := ( s _{ij} ^{(k)} ) _{ij} , \quad 
s_{0i} ^{(k)} =  -\delta _{ik} \frac{1}{\varsigma}, \quad 
s_{ij} ^{(k)} = s_{0j} ^{(k)} -s_{0i} ^{(k)} .  
\end{align*}  
\end{prop}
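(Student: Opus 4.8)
The plan is to deduce the Proposition from two facts: first, that every $e_k$ ($1\le k\le n-1$) is a genuine $1$-cocycle of the {\v{C}}ech complex (\ref{Ccomplex}); and second, that these $n-1$ cocycles already generate the cocycle group $Z^1(\frak{U},\mathcal{L})$ over $\Bbb{C}$, so \emph{a fortiori} their cohomology classes generate $H^1(\frak{U},\mathcal{L})$. As a complement I would also record that $B^1(\frak{U},\mathcal{L})=0$, which upgrades ``generate'' to ``basis'' and gives $\dim H^1(\frak{U},\mathcal{L})=n-1$.

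For the cocycle check I would invoke Remark \ref{rem1}: every double and triple intersection of members of $\frak{U}$ equals $U$, so $\Gamma(U_i\cap U_j,\mathcal{L})=\Bbb{C}\frac{1}{\varsigma}=\Gamma(U_i\cap U_j\cap U_l,\mathcal{L})$ and the restriction maps entering $\partial^1$ are all the identity of $\Bbb{C}\frac{1}{\varsigma}$. Hence $\partial^1 e_k=0$ may be verified on scalar coefficients, and with the convention $s_{00}^{(k)}=0$ the defining relation $s_{ab}^{(k)}=s_{0b}^{(k)}-s_{0a}^{(k)}$ makes the triple-index component $(\partial^1 e_k)_{ijl}=(s_{0l}^{(k)}-s_{0j}^{(k)})-(s_{0l}^{(k)}-s_{0i}^{(k)})+(s_{0j}^{(k)}-s_{0i}^{(k)})$ telescope to $0$; so each $e_k\in Z^1(\frak{U},\mathcal{L})$.

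For the spanning step, let $(s_{ij})_{ij}\in Z^1(\frak{U},\mathcal{L})$ be arbitrary. By Remark \ref{rem1} again I may write $s_{ij}=a_{ij}/\varsigma$ with $a_{ij}\in\Bbb{C}$, and the cocycle condition becomes the scalar system $a_{jl}-a_{il}+a_{ij}=0$ for $i<j<l$. Specializing to triples $(0,i,j)$ gives $a_{ij}=a_{0j}-a_{0i}$: the cocycle is determined by its ``zeroth row'' $(a_{0k})_{k=1}^{n-1}$, and comparison with the formula defining $e_k$ yields, after a short index bookkeeping, $(s_{ij})_{ij}=-\sum_{k=1}^{n-1}a_{0k}\,e_k$. (This is the elementary statement that a simplicial $1$-cocycle on a simplex is a coboundary, in the present disguise.) Thus $e_1,\dots,e_{n-1}$ generate $Z^1(\frak{U},\mathcal{L})$, hence — since $H^1$ is a quotient of $Z^1$ — generate $H^1(\frak{U},\mathcal{L})$, which is the assertion. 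Finally, since $U_i$ is homotopy equivalent to a punctured disc about $x_i$ and the monodromy of $\mathcal{L}$ there is $c_i^{-1}\ne 1$ (using $\alpha_i\notin\Bbb{Z}$), one has $\Gamma(U_i,\mathcal{L})=0$, so $C^0(\frak{U},\mathcal{L})=0$, $B^1(\frak{U},\mathcal{L})=0$ and $H^1(\frak{U},\mathcal{L})=Z^1(\frak{U},\mathcal{L})$; the linear map $(s_{ij})\mapsto(a_{0k})_k$ then shows $e_1,\dots,e_{n-1}$ are linearly independent, so they form a basis and $\dim_{\Bbb{C}}H^1(\frak{U},\mathcal{L})=n-1$.

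I do not anticipate a real obstacle. The two places that need a little care are the passage from $\mathcal{L}$-valued cochains to scalar coefficients with compatible restrictions (handled entirely by Remark \ref{rem1}) and the index bookkeeping behind the identity $(s_{ij})_{ij}=-\sum_{k}a_{0k}e_k$. A more structural alternative for the spanning step — observing that the truncation $C^{\ge 1}(\frak{U},\mathcal{L})$ is, via Remark \ref{rem1}, the degree $\ge 1$ part of the simplicial cochain complex of the $(n-1)$-simplex with coefficients in $\Bbb{C}\frac{1}{\varsigma}$, whose first cohomology vanishes — leads to the same conclusion but takes longer to set up than the direct computation above.
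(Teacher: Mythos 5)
Your proposal is correct and follows essentially the same route as the paper: use $\Gamma(U_i,\mathcal{L})=0$ to identify $H^1(\frak{U},\mathcal{L})$ with $Z^1(\frak{U},\mathcal{L})$, then use Remark \ref{rem1} to write cocycles as $(a_{ij}\frac{1}{\varsigma})_{ij}$ and the condition $a_{jl}-a_{il}+a_{ij}=0$ to reduce everything to the row $(a_{0k})_k$, i.e.\ to the span of $e_1,\ldots,e_{n-1}$. Your extra checks (that each $e_k$ is a cocycle, the explicit identity $(s_{ij})_{ij}=-\sum_k a_{0k}e_k$, and linear independence giving $\dim=n-1$) are fine refinements of the same argument.
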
 
\begin{proof} 
Note that we have $\Gamma \left( U_i , \mathcal{L}  \right) =0$, 
because any solution 
$c (t-x_0 )^{-\alpha _0} \cdots (t-x_{n-1} )^{-\alpha _{n-1}}$ 
of $(d+\omega )s=0$ 
cannot be defined as a single-valued function on $U_i$.  
So $H ^1 ( \frak{U} , \mathcal{L} )$ coincides with 
the set $\mathrm{Ker} \partial ^1$ of  
cocycles  
which is denoted by  
$Z ^1 ( \frak{U} , \mathcal{L} )$. 
By Remark {\ref{rem1}}, 
an arbitrary cochain can be expressed by 
$\left( a_{ij} \frac{1}{\varsigma} \right) _{ij}$, where 
$a_{ij} \in \Bbb{C}$. 
Here the cocycle condition is 
$a_{jk} -a_{ik} +a_{ij} =0$. 
Hence $a_{jk} = a_{ik} -a_{ij} =a _{0k} -a_{0j}$. 
\end{proof} 
We introduce the {\it Wronski matrix} $W$ corresponding to $\Phi$. 
\begin{defn}[Wronski matrix] 
We set 
$W:= 
\begin{bmatrix} 
\int _{reg(0,1)} \varsigma \eta _1 & \cdots & \int _{reg(0,1)} \varsigma 
\eta _{n-1} \\ 
\vdots & & \vdots \\ 
\int _{reg(0,n-1)} \varsigma \eta _1 & \cdots & \int _{reg(0,n-1)} \varsigma 
\eta _{n-1} 
\end{bmatrix}$, 
where   $\displaystyle{\eta _k := \frac{dt}{t-x_k}}$.
\end{defn} 
\begin{prop} 
The first cohomology of  twisted de Rham complex 
is generated by $\eta _1 ,\ldots , \eta _{n-1}$.  
\end{prop}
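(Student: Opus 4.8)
The plan is to prove the Proposition by an explicit reduction showing that every twisted de Rham class on $X_x$ is congruent, modulo $(d+\omega)\Gamma(X_x,\mathcal{O}_{X_x})$, to a $\Bbb{C}$-linear combination of $\eta_1=\frac{dt}{t-x_1},\dots,\eta_{n-1}=\frac{dt}{t-x_{n-1}}$. The first move is to replace an arbitrary class by a \emph{rational} representative $\eta=f\,dt$: since $d+\omega$ extends to a connection on $\Bbb{P}^1$ with regular (logarithmic) singularities at $x_0,\dots,x_{n-1},x_n=\infty$ whose residues $\alpha_0,\dots,\alpha_{n-1},\alpha_n:=-(\alpha_0+\cdots+\alpha_{n-1})$ are \emph{all} non-integral, the comparison theorem for regular singular connections lets one compute $H^1_{d+\omega}(X_x)$ from rational functions and rational $1$-forms with poles only along $\{x_0,\dots,x_{n-1},\infty\}$ (alternatively, finiteness is already available from the isomorphism $H^1_{d+\omega}(X_x)\cong H^1(\frak{U},\mathcal{L})$ obtained in the proof of Theorem~\ref{Cech_deR_isom}). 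So we may take $f(t)=p(t)+\sum_{i=0}^{n-1}\sum_{m\ge1}\frac{c_{i,m}}{(t-x_i)^m}$ with $p\in\Bbb{C}[t]$.

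The reduction then runs in three steps, each subtracting $(d+\omega)$ of a suitable rational function. \textbf{(1) Higher-order poles.} For $m\ge2$,
\begin{align*}
(d+\omega)\!\left(\frac{1}{(t-x_i)^{m-1}}\right)=\bigl(\alpha_i-(m-1)\bigr)\frac{dt}{(t-x_i)^m}+\sum_{j\ne i}\alpha_j\,\frac{dt}{(t-x_i)^{m-1}(t-x_j)},
\end{align*}
and $\alpha_i-(m-1)\ne0$ since $\alpha_i\notin\Bbb{Z}$; expanding the remaining summands by partial fractions gives only poles of order $<m$ at $x_i$ together with simple poles at the other $x_j$, so a downward induction on the pole orders removes every pole of order $\ge2$. \textbf{(2) Polynomial part.} For $m\ge0$,
\begin{align*}
(d+\omega)(t^{m+1})=(m+1+\beta)\,t^m\,dt+(\text{polynomial of degree}<m)\,dt+\sum_{i=0}^{n-1}\alpha_i x_i^{m+1}\,\frac{dt}{t-x_i},
\end{align*}
where $\beta:=\alpha_0+\cdots+\alpha_{n-1}=-\alpha_n\notin\Bbb{Z}$, so $m+1+\beta\ne0$ and a downward induction on $\deg p$ trades the polynomial part for simple poles at the $x_i$. \textbf{(3) Eliminating $\eta_0$.} The class is now represented by $\sum_{i=0}^{n-1}c_i\,\eta_i$; since $(d+\omega)(1)=\omega=\sum_{i=0}^{n-1}\alpha_i\,\eta_i$ and $\alpha_0\ne0$, subtracting $\frac{c_0}{\alpha_0}(d+\omega)(1)$ produces a representative in $\Bbb{C}\eta_1+\cdots+\Bbb{C}\eta_{n-1}$, which proves the Proposition.

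The only genuinely delicate point is the passage to rational representatives, intertwined with the role of the point at infinity. Because $x_n=\infty$ is itself a puncture, a holomorphic $1$-form on $X_x$ may have a pole of arbitrary order there — equivalently $f$ may carry a nonzero polynomial part — and eliminating it in step (2) is exactly where the non-resonance \emph{at infinity}, $\alpha_0+\cdots+\alpha_{n-1}\notin\Bbb{Z}$, is used; the three reductions themselves are routine bookkeeping once the inductive measures are fixed. If one prefers to avoid invoking a comparison theorem, the alternative route is to combine $\dim H^1_{d+\omega}(X_x)=\dim H^1(\frak{U},\mathcal{L})=n-1$ (the latter having basis $e_1,\dots,e_{n-1}$) with the identity $\Phi(\eta_k)=\sum_{l=1}^{n-1}\bigl(\int_{reg(0,l)}\varsigma\eta_k\bigr)e_l$: by injectivity of $\Phi$, the forms $\eta_1,\dots,\eta_{n-1}$ generate $H^1_{d+\omega}(X_x)$ precisely when the Wronski matrix $W$ is invertible, and then the main obstacle becomes the non-vanishing $\det W\ne0$, which one would prove by an explicit Beta-integral evaluation or by a degeneration argument letting some $x_i\to x_j$.
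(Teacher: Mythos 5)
Your argument is correct, but it is a genuinely different route from the paper's. The paper disposes of the Proposition in two lines: it quotes Varchenko's determinant formula (\ref{varchenko}) to get $\det W \neq 0$, concludes that $\Phi(\eta_1),\ldots,\Phi(\eta_{n-1})$ are linearly independent, and then (using the injectivity of $\Phi$ from Theorem \ref{Cech_deR_isom} together with $\dim H^1(\frak{U},\mathcal{L})=n-1$, established by the preceding Proposition) the $\eta_k$ generate; this is exactly the "alternative route" you sketch at the end, except that the paper simply cites Varchenko rather than proving $\det W\neq0$. Your primary argument instead performs an explicit partial-fraction reduction killing higher-order poles, the polynomial part, and $\eta_0$ by subtracting $(d+\omega)$ of rational functions; the three computations are correct, and the payoff is that you never need the determinant formula, obtaining a self-contained and more elementary proof. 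Two caveats are worth recording. First, your step (2) needs the non-resonance at infinity $\alpha_0+\cdots+\alpha_{n-1}\notin\Bbb{Z}$, which the paper never states explicitly; it is, however, implicitly required by the paper as well, since the factor $\Gamma(\alpha_0+\cdots+\alpha_N+1)^{-1}$ in (\ref{varchenko}) makes $\det W$ vanish when that sum is a negative integer, so this is a fair (and standard, cf.\ Aomoto--Kita) hypothesis rather than a gap. Second, the passage to rational representatives genuinely requires Deligne's comparison theorem for regular singular connections (cited in the bibliography): your parenthetical remark that "finiteness is already available" from $H^1_{d+\omega}(X_x)\cong H^1(\frak{U},\mathcal{L})$ does not by itself show that every holomorphic class on the Stein curve $X_x$ has a rational representative, so the comparison theorem (or an equivalent argument) must be invoked in earnest, which makes your route rest on a heavier general tool where the paper rests on a heavier special formula.
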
 
\begin{proof} 
By Varchenko formula (\ref{varchenko}) ({\cite{varchenko1}}{\cite{varchenko2}}), we have $\det W \not=0$. 
This shows that 
$\{ \Phi (\eta _1 ) , \ldots , \Phi (\eta _{n-1} ) \}$ is linearly independent. 
Hence $\{ \eta _1  , \ldots , \eta _{n-1}  \}$ is linearly independent. 
\end{proof} 
\begin{cor} \label{cor}
The morphism $\Phi$ is an isomorphism and the matrix corresponding to  
$\Phi$ with respect to 
a basis $\{ \eta _1 , \ldots \eta _{n-1} \}$ of $H^1 _{d+\omega} (X_x)$ 
and a basis $\{ e_1 , \ldots , e_{n-1} \}$ of $H^1 (\frak{U} ,\mathcal{L} )$ 
is the Wronski matrix $W$.  
\end{cor}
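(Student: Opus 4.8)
The plan is to assemble the result from the three statements immediately preceding it. By the two Propositions just proved, $\{\eta_1,\ldots,\eta_{n-1}\}$ is a basis of $H^1_{d+\omega}(X_x)$ and $\{e_1,\ldots,e_{n-1}\}$ is a basis of $H^1(\mathfrak{U},\mathcal{L})$; and by Theorem \ref{Cech_deR_isom} the map $\Phi$ is a well-defined injection. Since these two cohomology spaces have the same finite dimension $n-1$, an injective linear map between them is automatically an isomorphism, so the only substantive task is to compute the matrix of $\Phi$ in the two chosen bases and identify it with $W$.

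First I would compute $\Phi(\eta_k)$ using the explicit formula from Theorem \ref{Cech_deR_isom}, namely
\begin{align*}
\Phi(\eta_k) = \left( -\frac{1}{\varsigma} \int_{reg(i,j)} \varsigma\,\eta_k \right)_{ij}.
\end{align*}
By Remark \ref{rem1} this cochain is $\left( a^{(k)}_{ij} \frac{1}{\varsigma}\right)_{ij}$ with $a^{(k)}_{ij} = -\int_{reg(i,j)} \varsigma\,\eta_k \in \mathbb{C}$. Next I would expand $reg(i,j)$ using its defining expression $reg(i,j) = \frac{1}{c_i-1}\sigma_i\otimes\varsigma_{\sigma_i} - \frac{1}{c_j-1}\sigma_j\otimes\varsigma_{\sigma_j}$, which shows $a^{(k)}_{ij} = a^{(k)}_{0j} - a^{(k)}_{0i}$ (the integral over $reg(i,j)$ splits into a "$j$-part" minus an "$i$-part", each depending on a single index), so the cocycle $\Phi(\eta_k)$ is determined by its components $a^{(k)}_{0i} = -\int_{reg(0,i)}\varsigma\,\eta_k$ for $i=1,\ldots,n-1$. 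Comparing with the generator $e_k$, whose components are $s^{(k)}_{0i} = -\delta_{ik}\frac{1}{\varsigma}$, I would conclude that in the basis $\{e_1,\ldots,e_{n-1}\}$ the class $\Phi(\eta_k)$ has coordinate vector $\left(-a^{(k)}_{01},\ldots,-a^{(k)}_{0,n-1}\right) = \left(\int_{reg(0,1)}\varsigma\,\eta_k,\ldots,\int_{reg(0,n-1)}\varsigma\,\eta_k\right)$, since the cochain with $0i$-component $a_{0i}\frac{1}{\varsigma}$ equals $\sum_i (-a_{0i}) e_i$ modulo coboundaries (and in fact on the nose, as $\Gamma(U_i,\mathcal{L})=0$ forces $Z^1 = H^1$). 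Thus the $k$-th column of the matrix of $\Phi$ is exactly the $k$-th column of $W$.

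The remaining point is that $W$ is invertible, which also re-proves surjectivity of $\Phi$ concretely; this is exactly the content of the Varchenko determinant formula (\ref{varchenko}) already invoked in the proof of the second Proposition, giving $\det W \neq 0$. The main obstacle — such as it is — is purely bookkeeping: one must be careful about signs and about the index conventions relating $reg(i,j)$, the generators $e_k$ normalized via $s^{(k)}_{0i}=-\delta_{ik}\frac1\varsigma$, and the sign $-\frac1\varsigma$ in the formula for $\Phi$, so that the two minus signs cancel and the matrix comes out to be $W$ itself rather than $-W$ or its transpose. Once the bases are pinned down and the cocycle-to-coordinate dictionary is written out, the identification is immediate and the corollary follows.
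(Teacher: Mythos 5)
Your proposal is correct and follows essentially the same route as the paper, which simply cites Theorem \ref{Cech_deR_isom} together with $\det W \neq 0$ (Varchenko) and the two preceding Propositions; you merely spell out the coordinate computation of $\Phi(\eta_k)$ in the basis $\{e_i\}$ that the paper leaves implicit. The sign bookkeeping you carry out (the cancellation of the minus in $-\frac{1}{\varsigma}\int_{reg(i,j)}\varsigma\eta$ against the minus in $s^{(k)}_{0i}=-\delta_{ik}\frac{1}{\varsigma}$) is accurate and confirms the matrix is $W$ itself.
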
 
\begin{proof} 
This follows immediately from Theorem {\ref{Cech_deR_isom}} and the fact 
$\det W \not=0$.  
\end{proof}

\section{Relative twisted {\v{C}}ech-de Rham isomorphism} 
Recall the punctured projective lines of the form $X _x$ are parametrized by $x$, 
where $x$ runs through the configuration space $S$ 
of $n$-points on $\Bbb{C}$: 
$S := \Bbb{C} ^{n} \setminus \bigcup _{i \not= j} \{ x_i = x_j \}$. 
So the family $\{ X_x \} _{x \in S}$ forms 
an analytic family $\pi : X \longrightarrow S$, 
where $X = 
\left\{ \left. ( t , x ) \in \Bbb{P} ^1 \times S \ \right| \ 
t \not= x_0 , \ldots , x_{n} 
\right\}$.  
(The symbol $x_n$ denotes $\infty$.)    
It induces a vector bundle $\mathcal{H} ^1$ 
over $S$ each of whose fibers is 
$H ^1 _{d+\omega} ( X_x )$. 
Let $DR ^{\bullet} _{d+\omega}$ be a relative de Rham complex 
with twisted differential: 
\begin{align*}  
0 \longrightarrow \mathcal{O} _X 
\overset{d+\omega }{\longrightarrow} 
\Omega ^1 _{X/S} 
\longrightarrow 0 , 
\end{align*}
where $\omega$ is a $1$-form on $X$ defined by the same formula as 
(\ref{conn_form}). 
The vector bundle $\mathcal{H} ^1$ is  
the first cohomology of $\Bbb{R} \pi _{\ast} DR ^{\bullet} _{d+\omega}$. 
Its sections are represented by 
relative $1$-forms, because 
$\pi : X \longrightarrow S$ is Stein. 

The vector bundle $\mathcal{H} ^1$ has a natural connection $\nabla$ 
({\it Gau\ss -Manin connection}). 
For $\left[ g dt \right] \in \mathcal{H} ^1$ represented by 
a relative $1$-form, 
we can write down it explicitly (as seen in \cite{aomotokita}): 
$\nabla \left( [g dt] \right) = 
\left[ \frac{\partial g}{\partial x_0} dt  -\alpha _0 \frac{g dt}{t-x_0} 
 \right] \otimes dx_0 
+ \cdots + 
 \left[ \frac{\partial g}{\partial x_{n-1}} dt -\alpha _{n-1} \frac{g dt}{t-x_{n-1}} 
  \right] \otimes dx_{n-1}$. 

We have another vector bundle $\check{\mathcal{H}} ^1$ 
corresponding to {\v{C}}ech cohomology. 
Let $\mathcal{L} _{X/S}$ be the kernel of 
$d+\omega : \mathcal{O} _X \longrightarrow \Omega _{X/S} ^1$. 
The vector bundle $\check{\mathcal{H}} ^1$ should be defined by 
$R^1 \pi _{\ast} \mathcal{L} _{X/S}$. 
We shall construct and compute 
$R^1 \pi _{\ast} \mathcal{L} _{X}$ by means of {\v{C}}ech resolution. 
Note that $\mathcal{L} _{X/S}$ is isomorphic to 
$\mathcal{L} _{X} \otimes _{\Bbb{C} _{S}} \pi ^{-1} \mathcal{O} _{S}$, 
where $\mathcal{L} _{X}$ is a kernel of 
$d+\omega : \mathcal{O} _X \longrightarrow \Omega_{X} ^1$, 
whose local sections are generated over $\Bbb{C}$ by 
$(t-x_0) ^{-\alpha _0} \cdots (t-x_{n-1} )^{-\alpha _{n-1}}$. 
By projection formula, 
$\check{\mathcal{H}} ^1$ is isomorphic to 
$R^1 \pi _{\ast} \mathcal{L} _X \otimes _{\Bbb{C} _S} \mathcal{O} _S$. 

For $I=(i_0 , \ldots , i_{n-1} )$, we take an open set 
$V _I:= \{ \mathrm{Re} x _{i_0} < \mathrm{Re} x_{i_1} 
< \cdots < \mathrm{Re} x_{i_{n-1}}  \} \subset S$, 
and compute 
$\Gamma ( V _I , R ^1 \pi _{\ast} \mathcal{L} _X )$. 
We have the following {\v{C}}ech resolution: 
\begin{align} \label{r_cech_complex}
0 
\longrightarrow 
\Gamma (  V _I ,  \pi _{\ast} \mathcal{L} _X ) 
\longrightarrow 
\bigoplus _{\alpha} \Gamma ( U_{i_{\alpha}} ^I ,\mathcal{L} _X) 
\overset{\partial ^0}{\longrightarrow} 
\bigoplus _{\alpha < \beta } \Gamma ( U_{i _{\alpha}} ^I \cap 
U_{i_{\beta}} ^I , \mathcal{L}  _X) 
\longrightarrow \cdots , 
\end{align} 
where 
$U _{i _{\alpha}} ^I := \pi ^{-1} ( V _I ) 
\setminus \bigcup _{j \not= i_{\alpha}} 
\left\{ (t,x) \in \Bbb{P} ^1 \times V _I \ \left| 
\ \arg ( t -x_j ) = \pi /2 
\right. 
\right\} 
$.  
\begin{rem} 
Let $U ^I$ be an open set given by 
$\pi ^{-1} ( V _I ) 
\setminus \bigcup _{j } 
\left\{ (t,x) \in \Bbb{P} ^1 \times V _I \ \left| 
\ \arg ( t -x_j ) = \pi /2 
\right. 
\right\} 
$.  
We can regard the function 
$(t-x_0) ^{-\alpha _0} \cdots (t-x_{n-1} )^{-\alpha _{n-1}}$ 
as a single-valued function. 
We denote it by $\frac{1}{\varsigma _I}$. 
Thus $\Gamma ( U_{i _{\alpha}} ^I \cap 
U_{i_{\beta}} ^I , \mathcal{L}  _X) $ is generated by $\frac{1}{\varsigma _I}$ 
because 
$U_{i _{\alpha}} ^I \cap 
U_{i_{\beta}} ^I $ coincides with $U^I$. 
\end{rem} 
\begin{prop} 
The first cohomology of the above complex ({\ref{r_cech_complex}}) 
is generated by 
$e_1 ^I , \ldots , e_{n-1} ^I$ which are defined by  
\begin{align*} 
e_k ^I:= ( s _{ij} ^{(k)} ) _{ij} , \quad 
s_{0i} ^{(k)} =  -\delta _{ik} \frac{1}{\varsigma _I}, \quad 
s_{ij} ^{(k)} = s_{0j} ^{(k)} -s_{0i} ^{(k)} .  
\end{align*}  
\end{prop}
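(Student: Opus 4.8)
The plan is to run the argument that proved the absolute version in \S5, with the covering $\{U^I_{i_\alpha}\}_\alpha$ of $\pi^{-1}(V_I)$ replacing $\{U_i\}$ and the single-valued branch $\varsigma_I^{-1}=(t-x_0)^{-\alpha_0}\cdots(t-x_{n-1})^{-\alpha_{n-1}}$ replacing $\varsigma^{-1}$. Concretely, I would compute the first cohomology $\mathrm{Ker}\,\partial^1/\mathrm{Im}\,\partial^0$ of the displayed complex (\ref{r_cech_complex}) directly from the explicit description of its terms; no acyclicity (Leray) property of the covering is needed for this reduction.

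The first step is the vanishing $\Gamma(U^I_{i_\alpha},\mathcal{L}_X)=0$ for every $\alpha$. A section of $\mathcal{L}_X$ over $U^I_{i_\alpha}$ would be a $\Bbb{C}$-multiple of $(t-x_0)^{-\alpha_0}\cdots(t-x_{n-1})^{-\alpha_{n-1}}$; but in forming $U^I_{i_\alpha}$ only the rays $l_j$ with $j\neq i_\alpha$ are removed, so each fibre over $V_I$ still carries a loop around $x_{i_\alpha}$, and continuation along it multiplies that function by $c_{i_\alpha}=\exp(2\pi\sqrt{-1}\,\alpha_{i_\alpha})\neq 1$ (this is exactly where $\alpha_{i_\alpha}\notin\Bbb{Z}$ enters), so no nonzero single-valued branch exists. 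Hence the degree-zero term $\bigoplus_\alpha\Gamma(U^I_{i_\alpha},\mathcal{L}_X)$ vanishes, $\partial^0=0$, and the first cohomology of (\ref{r_cech_complex}) coincides with the cocycle space $\mathrm{Ker}\,\partial^1$. By the Remark preceding the statement, $U^I_{i_\alpha}\cap U^I_{i_\beta}=U^I$ for $\alpha\neq\beta$ and $\Gamma(U^I,\mathcal{L}_X)=\Bbb{C}\,\varsigma_I^{-1}$, and likewise on triple intersections; so an arbitrary $1$-cochain is $(a_{ij}\varsigma_I^{-1})$ with $a_{ij}\in\Bbb{C}$, the cocycle condition $\partial^1=0$ reads $a_{jk}-a_{ik}+a_{ij}=0$, and fixing the base index $i_0$ gives $a_{jk}=a_{i_0 k}-a_{i_0 j}$. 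Thus a cocycle is determined by, and may be freely prescribed by, the tuple $(a_{i_0 i_1},\ldots,a_{i_0 i_{n-1}})\in\Bbb{C}^{n-1}$. Since $e^I_k$ is, by its defining formula, exactly the cocycle whose tuple is minus the $k$-th coordinate vector, the classes $e^I_1,\ldots,e^I_{n-1}$ form a basis of $\mathrm{Ker}\,\partial^1$, which is the first cohomology, giving the assertion.

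Only the vanishing $\Gamma(U^I_{i_\alpha},\mathcal{L}_X)=0$ is more than a formal manipulation, and it is the natural place to be careful: it hinges on the ray $l_{i_\alpha}$ being kept rather than deleted in $U^I_{i_\alpha}$, leaving a nontrivial loop, together with $c_{i_\alpha}\neq 1$. It is also worth recording (as in the Remark) that the $\{U^I_{i_\alpha}\}_\alpha$ genuinely cover $\pi^{-1}(V_I)$, because on $V_I$ the real parts $\mathrm{Re}\,x_{i_0}<\cdots<\mathrm{Re}\,x_{i_{n-1}}$ are pairwise distinct, so the rays $l_j$ are pairwise disjoint and each $(t,x)$ lies on at most one of them — this is built into the definition of $V_I$ and is what lets the absolute argument carry over unchanged.
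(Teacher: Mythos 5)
Your argument is correct and is exactly the paper's intended route: the paper gives no separate proof for this relative proposition, relying on the proof of its absolute analogue (vanishing of $\Gamma(U_{i_\alpha}^I,\mathcal{L}_X)$ by nontrivial monodromy $c_{i_\alpha}\neq 1$, hence $H^1=\mathrm{Ker}\,\partial^1$, then the cocycle relation $a_{jk}=a_{0k}-a_{0j}$ with all sections on intersections being $\Bbb{C}$-multiples of $\varsigma_I^{-1}$), which is precisely what you reproduce in the relative setting. Your added checks (disjointness of the rays over $V_I$ and the resulting covering property) are correct and harmless refinements.
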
 
\begin{thm} 
Let $\eta _{k} := \left[ \frac{dt}{t-x_k} \right] 
\in \Gamma ( V_I , \mathcal{H} ^1 )$ 
be a section represented by a relative $1$-form. 
The module $\mathcal{H} ^1 \big| _{V_I}$ are generated by 
$\eta _1 ,\ldots ,\eta _{n-1}$ over $\mathcal{O} _{V_I}$ 
and we have the following commutative diagram: 
\begin{align*} 
\xymatrix{
 \mathcal{H} ^1 \big| _{V_I} \ar[rr] _{\cong} ^{\Phi _{V_{I}}} 
\ar[d] _{\nabla} && 
( \Bbb{C} e_1 ^I \oplus \cdots \oplus \Bbb{C} e_{n-1} ^I ) 
\otimes _{\Bbb{C} _{V_I}}  
\mathcal{O} _{V_{I}} 
\ar[d] ^{1 \otimes d} 
\\ 
\mathcal{H} ^1 \big| _{V_I} 
\otimes _{\mathcal{O} _{V_{I}}} 
\Omega _{V_{I}} ^1 
\ar[rr] _{\cong} ^{\Phi _{V_I}} && 
( \Bbb{C} e_1 ^I \oplus \cdots \oplus \Bbb{C} e_{n-1} ^I ) 
\otimes _{\Bbb{C} _{V_I}} 
\Omega _{V_{I}} ^1  
} 
\end{align*} 
where the matrix corresponding to $\Phi _{V_I}$ with respect to 
a basis $\{ \eta _1 , \ldots \eta _{n-1} \}$ of $\mathcal{H} ^1 \big| _{V_I}$ 
and a basis $\{ e_1 ^I, \ldots , e_{n-1} ^I \}$  
is the Wronski matrix $W$. 
\end{thm}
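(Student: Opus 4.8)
The plan is to deduce the statement from a relative, parametrized version of Theorem~\ref{Cech_deR_isom}, a direct computation of $\Phi_{V_I}$ on the generators, and a differentiation-under-the-integral-sign argument for the commutative square. First I would upgrade Theorem~\ref{thm_P} to a relative statement over $V_I$: on $U^I$ the branch $\varsigma_I^{-1}=(t-x_0)^{-\alpha_0}\cdots(t-x_{n-1})^{-\alpha_{n-1}}$ is single-valued and depends holomorphically on $x\in V_I$, and $V_I$, being cut out by the convex conditions $\mathrm{Re}\,x_{i_0}<\cdots<\mathrm{Re}\,x_{i_{n-1}}$, is contractible, so the combinatorial type of the configuration is constant on it. Hence the proof of Theorem~\ref{thm_P} — analytic continuation of $\varsigma_{\gamma_t}^{-1}\int_{reg_{i_\alpha}\gamma_t}\varsigma_I\eta$ over $U^I_{i_\alpha}$ and the cancellation computed there — goes through verbatim with $x$ a holomorphic parameter. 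Consequently the two vertical sequences in the diagram of the proof of Theorem~\ref{Cech_deR_isom} stay exact in the relative setting, and the same diagram chase yields an $\mathcal{O}_{V_I}$-linear isomorphism $\Phi_{V_I}\colon\mathcal{H}^1|_{V_I}\to\check{\mathcal{H}}^1|_{V_I}$ with
\[
\Phi_{V_I}([\eta])=\Big(-\tfrac{1}{\varsigma_I}\int_{reg(i,j)}\varsigma_I\eta\Big)_{ij}.
\]
Freeness of $\mathcal{H}^1|_{V_I}$ on $\eta_1,\dots,\eta_{n-1}$ then follows, since these relative $1$-forms give global sections of $\mathcal{H}^1$ that, by $\det W\neq0$ (Varchenko's formula~\cite{varchenko1}\cite{varchenko2}) and Corollary~\ref{cor}, restrict to a basis on every fibre; Nakayama's lemma finishes it.

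Next I would read off the matrix. By the relative form of Remark~\ref{rem1}, an arbitrary cocycle $(a_{ij}\varsigma_I^{-1})_{ij}$ with $a_{ij}=a_{0j}-a_{0i}$ equals $\sum_m(-a_{0m})e_m^I$. Applying this to $\Phi_{V_I}(\eta_k)$, whose $(0,m)$-component is $-\varsigma_I^{-1}\int_{reg(0,m)}\varsigma_I\eta_k$, gives $\Phi_{V_I}(\eta_k)=\sum_m\big(\int_{reg(0,m)}\varsigma_I\eta_k\big)e_m^I$; hence the $(m,k)$-entry of the matrix of $\Phi_{V_I}$ is $\int_{reg(0,m)}\varsigma_I\eta_k$, i.e. exactly $W$.

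It remains to identify the right-hand vertical map and prove commutativity. Because $\check{\mathcal{H}}^1=R^1\pi_*\mathcal{L}_X\otimes_{\Bbb{C}_S}\mathcal{O}_S$, the operator $1\otimes d$ is the Gau\ss--Manin connection $\check\nabla$ of $\check{\mathcal{H}}^1$, whose flat sections are $R^1\pi_*\mathcal{L}_X$; and over $V_I$ the $e_m^I$ are $\check\nabla$-flat, since the chosen branch $\varsigma_I^{-1}$ is a horizontal section of the local system $\mathcal{L}_X$ on the simply connected $U^I$. So the square asserts exactly that $\Phi_{V_I}$ is horizontal, $\Phi_{V_I}\circ\nabla=\check\nabla\circ\Phi_{V_I}$. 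Writing $\eta=g\,dt$ and using the matrix computation together with the explicit Gau\ss--Manin formula recalled before the theorem, this reduces to the identities
\[
\frac{\partial}{\partial x_j}\int_{reg(0,m)}\varsigma_I\,g\,dt=\int_{reg(0,m)}\varsigma_I\Big(\frac{\partial g}{\partial x_j}-\alpha_j\frac{g}{t-x_j}\Big)dt\qquad(\text{all }j,m).
\]
Since $\partial_{x_j}\varsigma_I=-\alpha_j\varsigma_I/(t-x_j)$, the right-hand integrand is precisely $\partial_{x_j}(\varsigma_I g)$, so the identity is differentiation under the integral sign.

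The hard part is justifying this interchange. It is legitimate because over the contractible set $V_I$ the regularized twisted cycle $reg(0,m)$ can be chosen to vary locally constantly in $x$ — the regularization in the sense of Aomoto makes the integral absolutely convergent and turns the chain into a flat section of twisted homology there — so all the $x$-dependence of the period sits in the integrand and $\partial_{x_j}$ may be moved inside. Exhibiting this flat family $\{reg(0,m)\}$ over $V_I$ explicitly and checking the convergence and differentiability needed near the moving singularities $x_j$ is the one genuinely delicate point; everything else is the bookkeeping and diagram chase already carried out in the non-relative case.
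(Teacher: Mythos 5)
Your proposal is correct and takes essentially the same route as the paper: a relative (parametrized over $V_I$) version of the argument for Theorem \ref{Cech_deR_isom} to obtain $\Phi_{V_I}$ and identify its matrix with $W$, and commutativity of the square by differentiating under the integral sign using $\frac{1}{\varsigma}\frac{\partial \varsigma}{\partial x_j}=-\alpha_j\frac{1}{t-x_j}$. You in fact supply more detail than the paper's terse proof (the flat family of regularized cycles justifying the interchange, and the Varchenko/Nakayama argument for generation), but the underlying strategy is the same.
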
 
\begin{proof} 
We can prove this theorem in a similar way to the proof of 
Theorem {\ref{Cech_deR_isom}}. 
The commutativity of the diagram is derived from the formula: 
\begin{align*} 
\frac{\partial }{\partial x_j } \int \varsigma gdt  
= \int \frac{\partial }{\partial x_j } ( \varsigma g) dt 
= \int \varsigma \left( \frac{\partial g }{\partial x_j } 
+\frac{1}{\varsigma} \frac{\partial \varsigma}{\partial x_j } g 
\right) dt , 
\end{align*} 
where 
$\displaystyle{\frac{1}{\varsigma} \frac{\partial \varsigma}{\partial x_j } }
= -\alpha _j \frac{1}{t-x_j}$. 
\end{proof}

\appendix
\section{Varchenko formula} 
The determinant of Wronski matrix corresponding to hypergeometric system 
is given by Varchenko (as seen in {\cite{varchenko1}}, {\cite{varchenko2}}). 
The explicit form of it is written by $\Gamma$-factors. 
\begin{prop}[Varchenko formula] 
Let $c (f , \Delta )$ 
be the value of the fixed branch of $f$ 
which is maximum in absolute value on $\Delta$. 
Then we have 
\begin{align} \label{varchenko} 
\det \left[ 
\int _{reg (x _{i-1} , x_{i})} \varsigma \frac{dt}{t-x_j} 
\right] _{1 \leq i \leq N , 1 \leq j \leq N} 
= 
\frac{1}{\alpha _1 \cdots \alpha _{N}} 
\frac{\Gamma (\alpha _0 +1) \cdots \Gamma (\alpha _N +1)}{
\Gamma ( \alpha _0 + \cdots + \alpha _N +1 ) 
} 
\prod _{1 \leq i \leq N, 0 \leq j \leq N} c \left( (t-x_j)^{\alpha _j} , 
reg (x_{i-1} , x_i ) \right) .  
\end{align}   
\end{prop}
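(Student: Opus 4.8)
The plan is to separate the two ingredients of the right-hand side: the transcendental factor $\frac{1}{\alpha_1\cdots\alpha_N}\frac{\Gamma(\alpha_0+1)\cdots\Gamma(\alpha_N+1)}{\Gamma(\alpha_0+\cdots+\alpha_N+1)}$, which does not depend on the positions $x=(x_0,\ldots,x_N)$, and the product of maximal branch values $P(x):=\prod_{i,j}c((t-x_j)^{\alpha_j},reg(x_{i-1},x_i))$, which carries all of the dependence on $x$. Accordingly I would prove the identity in two stages: first that $\det W$ and $P$ have the same logarithmic derivative in $x$, so that $\det W=c_0(\alpha)\,P$ for a factor $c_0$ independent of $x$; and second that $c_0(\alpha)$ equals the claimed $\Gamma$-quotient, by letting two neighbouring points collide and inducting on $N$. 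Throughout I work on the ordered chamber $V_I=\{x_0<\cdots<x_N\}$, where the regularized chains $reg(x_{i-1},x_i)$ and the fixed branch of $\varsigma$ are unambiguous, recalling that $\infty$ is the extra puncture carrying exponent $-(\alpha_0+\cdots+\alpha_N)$; the general configuration follows at the end by analytic continuation.

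For the first stage I use that the period matrix $W=\bigl(\int_{reg(x_{i-1},x_i)}\varsigma\eta_j\bigr)$ solves the Gau\ss--Manin system. Differentiating under the integral sign, exactly the computation displayed in the proof of the preceding theorem, $\partial_{x_j}\int\varsigma g\,dt=\int\varsigma(\partial_{x_j}g-\alpha_j\frac{g}{t-x_j})\,dt$, and using that the chains $reg(x_{i-1},x_i)$ are locally constant over $V_I$, one gets $dW=W\,\Omega$, where $\Omega$ is the connection matrix of $\nabla$ in the frame $\{\eta_1,\ldots,\eta_N\}$ (a basis by the standard dimension count for $H^1_{d+\omega}$, which is independent of the Varchenko formula). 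Crucially this uses nothing about $\det W\neq0$, so there is no circularity with the earlier propositions, and by Jacobi's formula $d\log\det W=\mathrm{tr}\,\Omega$. I would then compute $\mathrm{tr}\,\Omega$ from the explicit Gau\ss--Manin formula by reducing each class $[\,\delta_{jk}\frac{dt}{(t-x_k)^2}-\alpha_j\frac{dt}{(t-x_k)(t-x_j)}\,]$ to the $\eta$-basis, using the partial-fraction identity $\frac{dt}{(t-x_k)(t-x_l)}=\frac{1}{x_k-x_l}(\eta_k-\eta_l)$, the cohomology relation $\sum_{l=0}^{N}\alpha_l\eta_l=0$ coming from $(d+\omega)(1)=\omega$ (which eliminates $\eta_0$), and the double-pole reduction $(1-\alpha_k)[\frac{dt}{(t-x_k)^2}]=\sum_{l\neq k}\frac{\alpha_l}{x_k-x_l}(\eta_k-\eta_l)$. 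A short bookkeeping shows the coefficient of $d\log(x_j-x_i)$ in the resulting closed form is $\alpha_i+\alpha_j$, i.e. $\mathrm{tr}\,\Omega=\sum_{0\le i<j\le N}(\alpha_i+\alpha_j)\,d\log(x_j-x_i)$. On the other side, on $V_I$ the maximal branch value of $(t-x_j)^{\alpha_j}$ on $reg(x_{i-1},x_i)$ is attained at whichever endpoint is farther from $x_j$, so $\log|P|=\sum_j\alpha_j\sum_i\log|x_{a(i,j)}-x_j|$, and the same count gives $d\log P=\sum_{i<j}(\alpha_i+\alpha_j)\,d\log(x_j-x_i)$. Hence $d\log(\det W/P)=0$ and $\det W=c_0(\alpha)\,P$.

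For the second stage I evaluate the $x$-independent factor $c_0(\alpha)$ in the degeneration $x_1\to x_0$ inside $V_I$. Setting $\epsilon=x_1-x_0$ and rescaling $t=x_0+\epsilon u$, I claim $\epsilon^{-(\alpha_0+\alpha_1)}\det W$ converges and factorizes: the row $reg(x_0,x_1)$ becomes, after cofactor expansion, dominated by its entry against $\eta_1$, which limits to Euler's integral $B(\alpha_0+1,\alpha_1)=\frac{\Gamma(\alpha_0+1)\Gamma(\alpha_1)}{\Gamma(\alpha_0+\alpha_1+1)}$ up to a branch constant (its entries against $\eta_2,\ldots,\eta_N$ carry one extra power of $\epsilon$ and drop out), while the remaining rows converge to the period matrix of the $N$-point configuration $(x_0,x_2,\ldots,x_N)$ with merged exponent $\alpha_0+\alpha_1$ at $x_0$. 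Comparing leading coefficients with $\det W=c_0(\alpha)\,P$ and with the parallel factorization of $P$ yields the recursion $c_0^{(N)}(\alpha_0,\ldots,\alpha_N)=B(\alpha_0+1,\alpha_1)\,c_0^{(N-1)}(\alpha_0+\alpha_1,\alpha_2,\ldots,\alpha_N)$. The base case $N=1$ is again Euler's integral, and an immediate telescoping of $\Gamma$-factors turns this recursion into $c_0(\alpha)=\frac{1}{\alpha_1\cdots\alpha_N}\frac{\Gamma(\alpha_0+1)\cdots\Gamma(\alpha_N+1)}{\Gamma(\alpha_0+\cdots+\alpha_N+1)}$, completing the identity on $V_I$; analytic continuation then gives the general statement.

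The main obstacle is the degeneration analysis of the second stage: one must establish rigorously that, after factoring $\epsilon^{\alpha_0+\alpha_1}$, the matrix becomes block triangular in the limit, namely that the $reg(x_0,x_1)$-integrals against $\eta_2,\ldots,\eta_N$ are genuinely subleading and that the other rows have finite limits equal to the reduced periods. This demands uniform control of the regularized integrals and their branch factors as the singularity $x_1$ merges into $x_0$. The trace computation of the first stage, though only finite linear algebra in cohomology, also needs care because $\eta_0$ is not a basis vector and re-enters through $\sum_l\alpha_l\eta_l=0$; and the phases in $P$, which are locally constant on $V_I$ and hence invisible to $d\log$, must be restored carefully to match the branch constants produced by the Euler integrals.
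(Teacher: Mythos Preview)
The paper does not actually prove this proposition: it is placed in an appendix, attributed to Varchenko, and cited from \cite{varchenko1,varchenko2} without argument. So there is no ``paper's own proof'' to compare against; the formula is being quoted as an external input (in fact the paper uses it to establish $\det W\neq 0$ and hence the linear independence of the $\eta_k$).

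Your strategy---first identifying the $x$-dependence via $d\log\det W=\mathrm{tr}\,\Omega$ and Jacobi's formula, then pinning down the $x$-independent $\Gamma$-factor by an induction on $N$ through the collision $x_1\to x_0$---is one of the standard routes to such determinant formulas and is, in outline, how Varchenko himself proceeds. A few remarks on the details:
\begin{itemize}
\item You are right to flag the circularity issue. In this paper the basis property of $\{\eta_1,\ldots,\eta_{n-1}\}$ is \emph{deduced} from the Varchenko formula, so a self-contained proof must establish it independently. Your appeal to the standard dimension count (Euler characteristic of $X_x$ together with $H^0_{d+\omega}=0$, plus the elementary reduction of higher-order poles) is the correct fix, and it is genuinely independent of the determinant formula.
\item The trace computation you sketch is correct, and your double-pole reduction $(1-\alpha_k)[\tfrac{dt}{(t-x_k)^2}]=\sum_{l\neq k}\tfrac{\alpha_l}{x_k-x_l}(\eta_k-\eta_l)$ is exactly the identity one needs. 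Be careful when $j=0$: since $\eta_0$ is eliminated via $\sum_l\alpha_l\eta_l=0$, the diagonal contribution from $\nabla_{\partial_{x_0}}\eta_k$ picks up an extra $\alpha_k$-term, and this is what makes the final exponent of $d\log(x_j-x_i)$ come out as $\alpha_i+\alpha_j$ symmetrically.
\item Your identification of $d\log P$ tacitly assumes $\mathrm{Re}\,\alpha_j>0$ so that the maximum of $|t-x_j|^{\alpha_j}$ is at the far endpoint. This is fine as a computation on an open set of parameters, with the full statement following by analytic continuation, but it should be said.
\item The degeneration step is indeed the crux. One must check not only that the first row becomes $(B(\alpha_0+1,\alpha_1),O(\epsilon),\ldots,O(\epsilon))$ after rescaling, but also that the remaining $(N-1)\times(N-1)$ block converges to the period matrix for $(x_0,x_2,\ldots,x_N)$ with exponent $\alpha_0+\alpha_1$ at $x_0$; the latter requires tracking how $reg(x_1,x_2)$ deforms to $reg(x_0,x_2)$ and how the branch of $\varsigma$ limits. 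Your final paragraph correctly names these as the honest analytic obligations.
\end{itemize}
In short: the proposal is a sound plan, matches the spirit of Varchenko's original argument, and you have correctly isolated where the real work lies.
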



\address{Department of Mathematics \\ Faculty of Science \\  Kyoto University \\ 
Kitashirakawa-Oiwakechou, Sakyou \\ Kyoto 606-8502, Japan}{koki@math.kyoto-u.ac.jp}

\end{document}